\newtheorem{theorem}{Theorem}[section]
\newtheorem{proposition}[theorem]{Proposition}
\newtheorem{corollary}[theorem]{Corollary}
\newtheorem{lemma}[theorem]{Lemma}
\newtheorem{algoritmo}[theorem]{Algorithm}
\theoremstyle{definition}
\begin{document}
\vspace{-5mm}
	
	\title{Equivariant mappings and invariant sets \\
	on Minkowski space}
\maketitle

\centerline{\scshape Miriam Manoel\footnote{Email address (corresponding author):
miriam@icmc.usp.br}}
{\footnotesize \centerline{Departamento de Matem\'atica, ICMC}
\centerline{Universidade de S\~ao Paulo} \centerline{13560-970 Caixa
Postal 668, S\~ao Carlos, SP - Brazil }}

\centerline{\scshape Leandro N. Oliveira \footnote{Email address:
leandro.oliveira@ufac.br}}
{\footnotesize \centerline{Centro de Ciências Exatas e Tecnol\'ogicas - CCET, UFAC}
\centerline{Universidade Federal do Acre} \centerline{69920-900 Rod. BR 364, Km 04, Rio Branco - AC, Brazil }}

\begin{abstract}
	In this paper we introduce the systematic study of invariant functions and equivariant mappings defined on  Minkowski space under the action of the Lorentz group. We adapt some known results from the orthogonal group acting on the Euclidean space to the Lorentz group acting on the Minkowski space. In addition, an algorithm is  given to compute generators of the ring of functions that are invariant under an important class of Lorentz subgroups, namely when these are generated by involutions, which is also useful to compute equivariants. Furthermore, general results on invariant subspaces of the Minkowski space are presented, with a characterization of invariant lines and planes in  the two lowest dimensions.   
\end{abstract}

\noindent {\bf Keywords.} symmetry,  Lorentz group, Minkowski space, group representation.

\noindent {\bf 2010 MSC.}	 22E43, 51B20, 13A50.

\section{Introdution}

The interest of studying the effect of symmetries in a mathematical model is common sense. They frequently appear in the applications, in general as a result of the  geometry of the configuration domain, the modeling assumptions, the method to reducing to normal forms and so on.  In dynamical systems, certain observations  are tipically related to patterns of symmetry, namely degeneracy of solutions, high codimension bifurcations, unexpected stabilities, phase relations, synchrony in coupled systems and periodicity of solutions.  These phenomena, not expected in absence of symmetries, can be predicted once the inherent symmetries are taken into account in the model formulation. This observation has stimulated the great achievements in the theory of equivariant dynamical systems during the last decades. The set-up is based on group representation theory, once the set formed by these symmetries has a group structure.   We mention for example the works in \cite{ABDM, BM5, BM4, BM3} where algebraic invariant theory is used to derive the general form of equivariant mappings that define vector fields on real or complex Euclidean $n$-dimensional vector spaces under the action of subgroups of the orthogonal group ${\bf O}(n)$,  $n \geq 2$. A numerical analysis of such systems through computational programming has also been done, after the pionner works in \cite{Gat, Gat96, Sturm}. Steady-state bifurcation, Hopf bifurcation, classification of equivariant singularities among other subjects  have also been extensively studied in isolated dynamical systems or in networks of dynamical systems, for which we cite \cite{BM5, BM4, Buono, chossat, Lamb99, MR15, SGP, GS06, GS69} and hundreds of references therein.

The results in the present work go in a related but distinct direction. We use tools from group representation and from invariant theory to introduce a systematic study of symmetries of problems defined on the Minkowski space $\mathbb{R}^{n+1}_1$, $n \geq 1$. This is the $(n+1)$-dimensional real vector space $\mathbb{R}^{n+1}$ endowed with the pseudo inner product $\left< , \right> :\mathbb{R}^{n+1}_1\times\mathbb{R}^{n+1}_1\longrightarrow \mathbb{R}$ of signature $(n,1)$,
$$\left< x,y\right> =  x^{t}Jy = \sum_{i=1}^{n}x_{i}y_{i}-x_{n+1}y_{n+1}, $$
where $$J=\left(\begin{array}{cc}
I_{n} & 0 \\ 0 & -1 \end{array}\right),$$
also denoted by $I_{n,1}$, and $I_{n} $ denoting the identity matrix of order $n$. This generalizes the spacetime $\mathbb{R}^{4}_1$, which is the mathematical structure on which Einstein's theory  of relativity is formulated.   In Minkowski space, the spatio-temporal coordinates of different observers are related by Lorentz transformations, so  any laws for systems in Minkowski spacetime must accordingly be Lorentz invariant \cite{Janssen}. The group of symmetries is assumed to be a subgroup of the Lorentz group
$${\bf O}( n,1) =\left\{ A\in GL\left( n+1,\mathbb{R}\right) :A^{t}JA=J\right\},$$ formed by isometries of $\mathbb{R}^{n+1}_1$; here the superscript $t$ denotes transposition. We consider the standard action of (subgroups of) ${\bf O}(n,1)$ on $\mathbb{R}^{n+1}_1$, namely given by matrix multiplication. We use algebraic invariant theory and group representation to deduce the basic general results for the systematic study of mappings with symmetries defined on Minkowski space. We refer to \cite{Ergin} and \cite{Read} for example for potential applications of our results in the context with symmetries. 

Many previous results for the Euclidean case adapt for our context in a natural way. The first to mention is a way to obtain equivariant mappings on $\mathbb{R}^{n+1}_1$ under the action of a subgroup $\Gamma < {\bf O}( n,1)$ from invariant functions under the diagonal action of same group defined on a cartesian product $\mathbb{R}^{n+1}_1\times\mathbb{R}^{n+1}_1$  (Theorem~\ref{algoinvariante}).  We also adapt and extend  \cite[Theorem 3.2]{BM4}, regarding the construction of the general forms of functions invariant under a class of Lorentz subgroups, given by  Algorithms~\ref{algoritmo}, using Reynolds operators.

There are however two new structures here, namely  the pseudo inner product and the distinct group structure of the set of symmetries, leading to new facts. For example, for an arbitrary Lorentz subgroup $\Gamma$,  not all Minkowski subspace $W \subset \mathbb{R}^{n+1}_1$  admits its orthogonal subspace $W^\perp$ (with respect to the pseudo inner product) as an invariant complement under the action of $\Gamma$; for the Euclidean case see  \cite[Propostion XII, 2.1]{GS69}. In fact,  we proof that this holds, namely that $W \oplus W^{\perp} = \mathbb{R}^{n+1}_1$ with both  $\Gamma$-invariant,  if, and only if, $W$ is $\Gamma$-invariant and nondegenerate with respect to the pseudo inner product  (Proposition~\ref{complementonaodegenerado}).  In addition, we give  a sufficient condition for a degenerate subspace to admit an invariant complement (Proposition~\ref{complementoluz}).  In Section~\ref{sec:invariant subspaces}  we characterize the invariant subspaces for the lowest dimensions $n=1,2$ using, in the second case,  the singular value decomposition of elements of the Lorentz group. This classification is directly  related to the structure of the Lorentz group: let ${\bf SO}_0(n,1)$ denote the connected component of the identity
and consider the elements (which we denote following the usual notation in the literature, as in (\cite{Gallier} for example),
$$ \Lambda^{p}=\left(\begin{array}{cc} I_{n-1,1} & 0 \\ 0 & 1 \end{array}\right), \ \ \  \Lambda^{t}= J.  $$
We use the decomposition of ${\bf O}(n,1)$  as a semi-direct product,
\begin{eqnarray} \label{semi-direct product}
{\bf O}(n,1)={\bf SO}_{0}(n,1) \rtimes \left({\bf Z}_2( \Lambda^{t}) \times {\bf Z}_2(\Lambda^{p})\right),
\end{eqnarray}
or, in other words, as the disjoint union
\begin{eqnarray} \label{decomposicao}
{\bf O}(n,1)={\bf SO}_{0}(n,1) \dot{\cup}  \Lambda^{p}{\bf SO}_{0}(n,1) \dot{\cup}  \Lambda^{t}{\bf SO}_{0}(n,1) \dot{\cup}\Lambda^{pt}{\bf SO}_{0}(n,1),
\end{eqnarray}
where  $\Lambda^{pt}= \Lambda^{p} \Lambda^{t}$.  We then use this decomposition to identify conjugacy classes of Lorentz subgroups, once subgroups in distinct connected components are nonconjugate. We also give the type of each possible invariant subspaces, namely, as spacelike, timelike or lightlike subspaces, and recognize which are fixed-point subspaces of Lorentz subgroups.   \\

Here is what comes in the following sections. Section~\ref{sec:mappings} is devoted to  invariant functions and equivariant mappings under subgroups of ${\bf O}(n,1)$. The two main results are Theorem~\ref{algoinvariante}, used to compute equivariants, and the algorithm presented in Subsection~\ref{subsec:algorithm}, used to compute invariants under a class of groups generated by involutions. An example  in Subsection~\ref{subsec:example} illustrates both methods.  In Section~\ref{sec:invariant subspaces} we present general results about invariant subspaces and their orthogonal complements, with special attention to fixed-point subspaces. The subjects of Subsections~\ref{subsec:2d} and \ref{subsec:3d}  are the invariant subspaces in the Minkowski plane and Minkowski 3-dimensional space.

\section{Invariant functions, equivariant mappings} \label{sec:mappings}
The aim of this section is to give results on the construction of invariant functions and equivariant mappings under the action of a Lorentz subgroup $\Gamma$. We adapt some results from invariant theory on Euclidean space for the Minkowski space. We point out that the results are algebraic in nature, so they hold for functions and mappings defined on any subspace of $ W \subseteq \mathbb{R}^{n+1}_1$ as long as it is $\Gamma$-invariant, namely, $\gamma x \in W$ for all $\gamma \in \Gamma$, $x \in W$.  For simplification, from now on we assume the domain to be the whole $\mathbb{R}^{n+1}_1$.

A function $f: \mathbb{R}^{n+1}_1  \rightarrow \mathbb{R}$ is called $\Gamma$-invariant  if
$$f( \gamma x)=f(x), \quad \forall \gamma \in \Gamma, \quad \forall x \in \mathbb{R}^{n+1}_1.$$
The set of $\Gamma$-invariant functions is a ring, which we shall denote $\mathcal{I}(\Gamma)$. A map $g:\mathbb{R}^{n+1}_1 \to \mathbb{R}^{n+1}_1$ is $\Gamma$-equivariant  if  it commutes with the action of $\Gamma$, that is, if $$g(\gamma x)=\gamma g(x) \quad \forall \gamma \in \Gamma, \quad \forall x \in \mathbb{R}^{n+1}_1.$$
The set of $\Gamma$-equivariant maps is a module over the ring $\mathcal{I}(\Gamma)$, denoted here by ${\mathcal{M}}(\Gamma)$.

In the Euclidean context it is well-known that if $f: \mathbb{R}^n \to \mathbb{R}$ is invariant under the action of a subgroup of the orthogonal group  ${\bf O}(n)$, then the gradient $\nabla f:\mathbb{R}^n \to \mathbb{R}^n$ is an equivariant map under this action. The corresponding result for the Lorentz group ${\bf O}(n,1)$ is given in Proposition~\ref{gradienteequivariante}, which can be used as a starting point to find generators for the module of equivariants if generators of the ring of invariants are known:

\begin{proposition} \label{gradienteequivariante}
	Let $\Gamma$ be a subgroup of ${\bf O}(n,1)$ and $f: \mathbb{R}^{n+1}_1 \to \mathbb{R}$ $\Gamma$-invariant. Then $J \nabla f$ is a $\Gamma$-equivariant map.
\end{proposition}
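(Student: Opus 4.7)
The plan is to differentiate the invariance identity and then exploit the defining identity $\gamma^{t}J\gamma=J$ of the Lorentz group to convert the transpose that appears via the chain rule into an inverse.

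First I would fix $\gamma\in\Gamma$ and apply the chain rule to the relation $f(\gamma x)=f(x)$. Writing the gradient as a column vector, the chain rule gives $\gamma^{t}\,\nabla f(\gamma x)=\nabla f(x)$ for every $x\in\mathbb{R}^{n+1}_{1}$. This is the standard step one uses in the Euclidean setting; here the matrix $\gamma$ is still linear, so nothing changes in this step.

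Next I would use the fact that $\gamma\in {\bf O}(n,1)$, i.e.\ $\gamma^{t}J\gamma=J$. Since $J^{2}=I$, this is equivalent to $\gamma^{t}=J\gamma^{-1}J$. Substituting into the previous identity gives $J\gamma^{-1}J\,\nabla f(\gamma x)=\nabla f(x)$. Multiplying on the left by $J$ and then by $\gamma$ yields
\[
J\,\nabla f(\gamma x)=\gamma\, J\,\nabla f(x),
\]
which is exactly the equivariance condition $(J\nabla f)(\gamma x)=\gamma\,(J\nabla f)(x)$. Since $\gamma\in\Gamma$ was arbitrary, the map $J\nabla f$ is $\Gamma$-equivariant.

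There is no real obstacle: the argument is a one-line bookkeeping manipulation. The only point worth highlighting is the appearance of $J$, which is precisely the correction needed because the natural pairing supplied by the chain rule is the Euclidean one, whereas the ambient geometry on $\mathbb{R}^{n+1}_{1}$ is the pseudo-inner product encoded by $J$; the factor $J$ in $J\nabla f$ converts the Euclidean gradient into the object that interacts correctly with the Lorentz action.
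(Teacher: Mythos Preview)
Your proof is correct and follows essentially the same route as the paper: differentiate the invariance identity to get $\gamma^{t}\nabla f(\gamma x)=\nabla f(x)$, then use the Lorentz relation $\gamma^{t}J\gamma=J$ (equivalently $\gamma J\gamma^{t}=J$, or $\gamma^{t}=J\gamma^{-1}J$) to clear the transpose. The paper phrases the second step as ``multiply both sides by $\gamma J$'', which is the same algebra you perform in two smaller steps.
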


\begin{proof}
	The equality $f(\gamma x)=f(x)$ implies  $\gamma^t \nabla(f(\gamma x))=\nabla(f(x))$. Now just multiply  both sides by $\gamma J$.
\end{proof}

The following result shows that a $\Gamma$-equivariant mapping $\mathbb{R}^{n+1}_1 \to \mathbb{R}^{n+1}_1$ can be given from a $\Gamma$-invariant function   $\mathbb{R}^{n+1}_1  \times \mathbb{R}^{n+1}_1 \to \mathbb{R}$ on the cartesian product, and vice-versa. The proof is constructive and follows the idea of the proof of  \cite[Theorem 6.8]{GS69}, providing a formula to construct one from the other.

\begin{theorem} \label{algoinvariante}
	Let $\Gamma$ be a subgroup of the Lorentz group ${\bf O}(n,1)$.  There is a one-to-one correspondence between $\Gamma$-equivariant mappings $\mathbb{R}^{n+1}_1 \to \mathbb{R}^{n+1}_1$ and $\Gamma$-invariant functions $\mathbb{R}^{n+1}_1 \times \mathbb{R}^{n+1}_1  \to \mathbb{R}$ under the diagonal action.
\end{theorem}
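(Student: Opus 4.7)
The plan is to give an explicit pair of constructions in opposite directions, mirroring the Euclidean argument of \cite[Theorem~6.8]{GS69} but with the pseudo inner product $\langle\cdot,\cdot\rangle$ in place of the Euclidean one, and then to verify that the compositions are the identity.

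First, for the forward direction, I would start from a $\Gamma$-equivariant map $g: \mathbb{R}^{n+1}_1\to \mathbb{R}^{n+1}_1$ and define
\[
f_g(x,y)=\langle g(x),y\rangle = g(x)^{t}Jy.
\]
Invariance under the diagonal action is immediate from equivariance of $g$ and the fact that every $\gamma\in\Gamma\subset{\bf O}(n,1)$ preserves the pseudo inner product:
\[
f_g(\gamma x,\gamma y)=\langle g(\gamma x),\gamma y\rangle
=\langle \gamma g(x),\gamma y\rangle=\langle g(x),y\rangle=f_g(x,y).
\]

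For the backward direction, given a $\Gamma$-invariant $f:\mathbb{R}^{n+1}_1\times\mathbb{R}^{n+1}_1\to\mathbb{R}$, I would apply the gradient trick in the spirit of Proposition~\ref{gradienteequivariante}, but only in the second slot. Differentiating the identity $f(\gamma x,\gamma y)=f(x,y)$ with respect to $y$ yields $\gamma^{t}\nabla_y f(\gamma x,\gamma y)=\nabla_y f(x,y)$. Multiplying on the left by $\gamma J$ and using the identity $\gamma J\gamma^{t}=J$ (which follows from $\gamma^{t}J\gamma=J$ together with $J^{2}=I$), I obtain
\[
J\nabla_y f(\gamma x,\gamma y)=\gamma\, J\nabla_y f(x,y).
\]
Setting $y=0$ shows that $g_f(x):=J\nabla_y f(x,0)$ is a $\Gamma$-equivariant map from $\mathbb{R}^{n+1}_1$ to itself.

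Finally, I would check that the two constructions are mutually inverse. Given $g$ equivariant, the function $f_g(x,y)=g(x)^{t}Jy$ is linear in $y$, so $\nabla_y f_g(x,0)=Jg(x)$ and hence $g_{f_g}(x)=J\cdot Jg(x)=g(x)$, recovering the original map and establishing the bijection asserted. The only nontrivial algebraic ingredient is the identity $\gamma J\gamma^{t}=J$, which is the Lorentz analogue of $\gamma\gamma^{t}=I$ in the Euclidean proof and which replaces essentially every use of orthogonality; once this is in hand the remaining computations are routine, and I do not anticipate any serious obstacle.
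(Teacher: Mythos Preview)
Your proposal is correct and follows essentially the same approach as the paper: define $f_g(x,y)=\langle g(x),y\rangle$ in one direction, recover $g$ as $J$ times the $y$-gradient of $f$ at $y=0$ in the other, and use the Lorentz identity $\gamma J\gamma^{t}=J$ (equivalently $J(\gamma^{t})^{-1}J=\gamma$) to carry the equivariance/invariance through. The only cosmetic difference is that the paper writes the backward step with the differential $(d_y f)_{(x,0)}$ and transposes, whereas you use gradient notation and left-multiply by $\gamma J$; both arguments verify only the identity $g_{f_g}=g$, which is exactly what the paper records.
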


\begin{proof}  Given $g: \mathbb{R}^{n+1}_1 \to \mathbb{R}^{n+1}_1$, take
$f: \mathbb{R}^{n+1}_1 \times \mathbb{R}^{n+1}_1 \to \mathbb{R},$
\begin{eqnarray} \label{invariant on cartesian}
 f(x,y) = \left<g(x), y) \right>.
\end{eqnarray}
This implies that
\begin{eqnarray} \label{equivariant}
g(x)=J\left(d_y f \right)^{t}_{(x,0)}.
\end{eqnarray}
If $g$ is $\Gamma$-equivariant, then for the diagonal action of $\Gamma$ on $\mathbb{R}^{n+1}_1 \times \mathbb{R}^{n+1}_1$ we have, for all $\gamma \in \Gamma$,
	\begin{eqnarray*}
		f(\gamma (x,y)) = \left< g(\gamma x),\gamma y\right>=\left< \gamma g(x),\gamma y\right>=f(x,y).
	\end{eqnarray*}
Conversely, for any $\gamma \in \Gamma$, if $f$  is $\Gamma$-invariant, then, differenting  $f(\gamma x, \gamma y) =f(x,y)$ at both sides with respect to $y$ at $(x,0)$,
	$$\left(d_y f \right)_{(\gamma x,0)} \gamma =\left(d_y f \right)_{(x,0)}.$$
Taking the transpose,
	$$g(\gamma x) =J \left(\gamma ^{t} \right)^{-1}Jg(x),$$
	so the result holds, since  $J \left(\gamma ^{t} \right)^{-1} J=\gamma$.
\end{proof}

The next result is our method to find a set of generators for the module of equivariant mappings: 

\begin{corollary} \label{cor: main}
If $\{u_i, i=1, \ldots s\}$ is a set of  generators of the ring of $\Gamma$-invariant functions  $\mathbb{R}^{n+1}_1 \times \mathbb{R}^{n+1}_1  \to \mathbb{R}$ under the diagonal action, then
$$J \left(d_y u_i \right)_{(x,0)}^t, \ i=1, \ldots, s$$
form a set of generators of the module   ${\mathcal{M}}(\Gamma)$ over ${\mathcal{I}}(\Gamma)$ .
\end{corollary}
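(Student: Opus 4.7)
The plan is to leverage Theorem~\ref{algoinvariante} to convert the equivariant problem into an invariant one, then exploit the hypothesis that $\{u_i\}$ generates the invariants.

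First, I take an arbitrary $g\in\mathcal{M}(\Gamma)$ and apply the correspondence of Theorem~\ref{algoinvariante}: setting $f(x,y)=\langle g(x),y\rangle$ gives a $\Gamma$-invariant function on $\mathbb{R}^{n+1}_1\times\mathbb{R}^{n+1}_1$ under the diagonal action, and the inverse direction of the correspondence recovers $g(x)=J(d_yf)^t_{(x,0)}$. By hypothesis, there exists a polynomial $P$ such that $f(x,y)=P(u_1(x,y),\ldots,u_s(x,y))$.

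Next I differentiate with respect to $y$ via the chain rule and evaluate at $(x,0)$:
\begin{equation*}
(d_yf)_{(x,0)}=\sum_{i=1}^{s}\frac{\partial P}{\partial u_i}\bigl(u_1(x,0),\ldots,u_s(x,0)\bigr)\,(d_yu_i)_{(x,0)}.
\end{equation*}
Transposing and multiplying by $J$ on the left yields
\begin{equation*}
g(x)=\sum_{i=1}^{s}c_i(x)\,J(d_yu_i)^t_{(x,0)},\qquad c_i(x):=\frac{\partial P}{\partial u_i}\bigl(u_1(x,0),\ldots,u_s(x,0)\bigr).
\end{equation*}

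The key verification is that each scalar coefficient $c_i$ lies in $\mathcal{I}(\Gamma)$. This follows from the diagonal invariance of $u_i$: setting $y=0$ in $u_i(\gamma x,\gamma y)=u_i(x,y)$ gives $u_i(\gamma x,0)=u_i(x,0)$, so every $x\mapsto u_i(x,0)$ is $\Gamma$-invariant, and hence so is any polynomial expression in them. Moreover each $J(d_yu_i)^t_{(x,0)}$ is itself $\Gamma$-equivariant, again by the converse direction of Theorem~\ref{algoinvariante} applied to $u_i$. Thus the displayed formula expresses an arbitrary $g\in\mathcal{M}(\Gamma)$ as an $\mathcal{I}(\Gamma)$-linear combination of the proposed generators, completing the proof.

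I do not anticipate a serious obstacle: the only subtle point is ensuring the coefficients $c_i(x)$ land in $\mathcal{I}(\Gamma)$ rather than merely being functions on $\mathbb{R}^{n+1}_1$, and this is exactly what the restriction $y=0$ combined with diagonal invariance delivers. The argument is essentially a chain-rule computation wrapped around the bijection of Theorem~\ref{algoinvariante}.
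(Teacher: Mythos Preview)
Your proof is correct and follows essentially the same route as the paper's: both pass from $g$ to $f(x,y)=\langle g(x),y\rangle$ via Theorem~\ref{algoinvariante}, write $f$ as a function of the generators $u_i$, apply the chain rule in $y$ at $(x,0)$, and observe that the resulting coefficients $\partial h/\partial X_i\bigl(u_1(x,0),\ldots,u_s(x,0)\bigr)$ lie in $\mathcal{I}(\Gamma)$. Your write-up is in fact slightly more explicit than the paper's in justifying this last point (via $u_i(\gamma x,0)=u_i(x,0)$) and in noting that each $J(d_yu_i)^t_{(x,0)}$ is itself equivariant.
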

\begin{proof}
For $g \in {\mathcal{M}}(\Gamma)$, we take $f$ as in (\ref{invariant on cartesian}). By hypothesis,
	$$f(x,y) = h(u_1(x,y),\cdots,u_s(x,y)),$$
	for some $h:\mathbb{R}^s \to \mathbb{R}$. This implies, using (\ref{equivariant}), that
	$$g(x)=\sum_{i=0}^{s} \frac{\partial h}{\partial X_i}(u_1,\cdots,u_s)|_{y=0} \cdot J \left(d_y u_i \right)^{t}_{(x,0)},$$
where $X = (X_1, \ldots, X_s) \in \mathbb{R}^s$. So the result follows, since ${\partial h}/{\partial X_i}(u_1(x,0),\cdots,u_s(x,0)) \in {\mathcal{I}}(\Gamma)$, $i=1, \ldots s$.
\end{proof}
\noindent This result is illustrated  with an example, presented in Subsection~\ref{subsec:example}.

\subsection{An algorithm to compute invariants under subgroups containing involutions}  \label{subsec:algorithm}

For the following, recall that an involution is an invertible map which is its own inverse. Involutions in ${\bf O}(n,1)$ are order-2 matrices, also called generalized reflections.  The aim here is to present  Algorithm~\ref{algoritmo} which gives generators of the ring of invariant functions  under the class of Lorentz subgroups  $ \Gamma < {\bf O} (n,1)$ that are generated by  a finite set of involutions. More generally, we consider
\begin{eqnarray}  \label{eq:index2m}
 \Gamma = \Sigma  \rtimes \Delta,
 \end{eqnarray}
for $\Sigma < \Gamma$ a subgroup such that ${\cal I}(\Sigma)$ is finitely generated and for  $\Delta  = [ \delta_1, \ldots, \delta_m] < \Gamma $ generated by involutions,   $\delta_i^2 = I$, $i = 1, \ldots, m$, $m \geq 1$. More precisely, we deduce generators under the whole group from the generators under the subgroup $\Sigma$. This result generalizes \cite[Theorem 4.5]{BM5},  obtained for $m = 1$, in which case $\Gamma = \Sigma_1 \rtimes {\bf Z}_2(\delta)$. Recall the two Reynolds operators on the ring of invariants   $ R, S: {\cal I}(\Sigma_1) \to {\cal I}({\Sigma_1}), $
\begin{eqnarray} \label{eq:delta inv}
R(f)(x)=f(x)+f(\delta x), \ \ \ \ \  S(f)(x)=f(x)- f(\delta x).
\end{eqnarray}
It follows that if $\{u_1,u_2,\cdots,u_s\}$ is a set of generators of the subring ${\cal I}(\Sigma_1)$, then  the set
\begin{eqnarray} \label{eq:gen inv}
\{R(u_i),S(u_i)S(u_j), 1 \leq i,j \leq s\}
\end{eqnarray}
generates the ring ${\cal I}(\Sigma_1 \rtimes {\bf Z}_2(\delta) )$ (\cite[Theorem 3.2]{BM4}). 

The result has been obtained in \cite{BM4} to deduce general forms of relative invariants. We use it here in a different context, establishing an algorithm for the calculation of generators of invariant functions under the action of a group $\Gamma$ as given in (\ref{eq:index2m}). The idea is to compute the generators recursively, imposing the invariance under each $\delta_i$, $i=1, \ldots, m$, at each step. This follows from the fact that the equality  
$${\cal I}(\Sigma \rtimes {\bf Z}_2(\delta)) \ = \   {\cal I}(\Sigma) \cap {\cal I} ({\bf Z}_2(\delta)), $$
which is the foundation to obtain (\ref{eq:gen inv}), generalizes to
$$ {\cal I}(\Sigma \rtimes [\delta_1, \ldots, \delta_m]) \ = \   {\cal I}(\Sigma) \ \bigcap_{i=1}^{m}  \ {\cal I} ({\bf Z}_2(\delta_i)). $$
Obviously the equalities above for the semi-direct product  hold for a direct product as well. We then have: 

	\begin{algoritmo} \label{algoritmo}
	\rule{0cm}{0cm}\\[-7mm]
\begin{description}
\item[\sc Input:] \parbox[t]{12cm}{
$\cdot$ a set $\{u_1,u_2,\cdots,u_s\}$ of generators of the ring ${\cal I}(\Sigma)$\\
$\cdot$ a set of  involutions $\delta_i, \ i=1, \ldots, m$} \\
		
\item[\sc Output:] \parbox[t]{11.5cm}{ Generators of ${\cal I}(\Gamma)$, where $\Gamma = \Sigma \rtimes [\delta_1, \ldots, \delta_m]$ or $\Sigma \times [\delta_1, \ldots, \delta_m]$}\\

\item[\sc Procedure:]\rule{0cm}{0cm}\\		
	    $\cdot$ $\Sigma_0:= \Sigma$\\
	    $\cdot$ $u_{0i}  := u_i$, for $i=1, \ldots s$  \\
	    $\cdot$ $s_0 := s$ \\
	    for k from 1 to $m$ do \\
		 \hspace*{5mm} $\Sigma_{k} :=\Sigma_{k-1} \rtimes {\bf Z}_2(\delta_k)$\\
		 \hspace*{5mm} for $\delta$ in  (\ref{eq:delta inv}), do $\delta:= \delta_k$\\
		 \hspace*{1cm} compute the set in  (\ref{eq:gen inv})  from  $\{u_{kj}, j =1 \ldots, s_k\}$  \\
		 \hspace*{1cm} return  generators of ${\cal I}(\Sigma_{k})$: $\{u_{k1}, \ldots, u_{k s_{k}} \}$\\
		  \hspace*{5mm}end\\
		end     \\
		return  generators of ${\cal I}({\Gamma})$: $\{u_{m1}, \ldots, u_{m s_{m}}\}$
		\end{description}
\end{algoritmo}
	
\subsection{An example} \label{subsec:example}

Consider the Lorentz subgroup $\Gamma < {\bf O}(3,1)$,
	$$\Gamma={\bf \widetilde{SO}}_{0}(3,1) \rtimes ({\bf Z}_2(\delta_1) \times {\bf Z}_2(\delta_2)),$$
	where
	\begin{eqnarray}
	{\bf \widetilde{SO}}_{0}(3,1) =\left\{ \xi=\left( \begin{array}{cc} \tilde{\xi} & 0 \\ 0 & I_2 \end{array} \right):  \ \tilde{\xi} \in {\bf {\bf SO}}(2) \right\}, \label{grupocompacto}
	\end{eqnarray}
	and for an arbitrary (but fixed) $\theta \in \mathbb{R},$
		$$\delta_1:= \left( \begin{array}{cccc} 1 & 0  & 0 & 0  \\ 
0 & 1 & 0 & 0 \\
0 & 0 &     \cosh \theta &  \sinh \theta \\
  0 & 0 &  - \sinh \theta & -   \cosh \theta  \end{array} \right),  \ \ 
 \delta_2:= \left( \begin{array}{cccc} 1 & 0  & 0 & 0  \\ 
 0 & 1 & 0 & 0 \\
 0 & 0 &    - \cosh \theta & -   \sinh \theta \\
   0 & 0 &   \sinh \theta &   \cosh \theta  \end{array} \right). $$
   We obtain a set of generators for the ring ${\cal I}(\Gamma)$ of $\Gamma$-invariant function $\mathbb{R}^{4}_1 \to \mathbb{R}$.
	The representation of ${\bf \widetilde {SO}}_{0}(3,1)$ is  isomorphic to the 4-dimensional standard representation of ${\bf SO}(2) \times \{I_2\}$; it then follows trivially that, in coordinates $(x_1,x_2,x_3,x_4)$ of $\mathbb{R}^{4}_1$  the set $\{u_1:= x_1^2+x_2^2,u_2:=x_3,u_3 := x_4\}$ generates the ring ${\cal I}({\bf \widetilde{SO}}_{0}(3,1))$. We now apply Algorithm~\ref{algoritmo}: for $k=1$, $\Sigma_1 ={\bf \widetilde{SO}}_{0}(3,1) \rtimes  {\bf Z}_2(\delta_1)$  and the Reynolds operators in (\ref{eq:delta inv}) are taken for $\delta = \delta_1$, which are denoted here by $R_{\delta_1},   \ S_{\delta_1}.$ The computation of the set  (\ref{eq:gen inv}) of generators of ${\cal I}(\Sigma_1)$ gives
	\begin{eqnarray*}
		R_{\delta_1}(u_{01})&=&x_1^2+x_2^2, \\
		R_{\delta_1}(u_{02})& =& \frac{1}{2}((   \cosh \theta+1) x_3+ \sinh \theta x_4), \\
		R_{\delta_1}(u_{03})& =&-\frac{1}{2}((   \cosh \theta-1) x_4+ \sinh \theta x_3),
	\end{eqnarray*}
and
\begin{eqnarray*}	S_{\delta_1}(u_{01})&=&0, \\
			S_{\delta_1}(u_{02})&=&-\frac{1}{2}((   \cosh \theta-1) x_3+ \sinh \theta x_4), \\
			S_{\delta_1}(u_{03})&=&\frac{1}{2}((   \cosh \theta+1) x_4+ \sinh \theta x_3).
\end{eqnarray*}
Observing that
$$R_{\delta_1}(u_{02})=((1-{   \cosh \theta})/{ \sinh \theta}) R_{\delta_1}(u_{03}), \ \  S_{\delta_1}(u_{02})=((1-{   \cosh \theta)}/{ \sinh \theta})S_{\delta_1}(u_{03}),$$
it follows that
 $\{R_{\delta_1}(u_{01}), R_{\delta_1}(u_{02}), S_{\delta_1}(u_{02})^2\}$ generates ${\cal I}({\bf \widetilde {SO}}_{0}(3,1)) \rtimes {\bf Z}_2(\delta_1))$.
 Alternatively, manipulating these generators, another generating set for this ring is given by
 $ \{u_{11}:= R_{\delta_1}(u_{01}),  u_{12}:=   R_{\delta_1}(u_{03}), u_{13}:= x_3^2-x_4^2\} ,$
  since
 $$\frac{1}{2}(   \cosh \theta-1)(R_{\delta_1}(u_{03})^2 - 2 S_{\delta_1}(u_{02})^2) = x_3^2 - x_4^2. $$

\noindent For $k=2$,  $\Sigma_2 = \Sigma_1 \rtimes  {\bf Z}_2(\delta_2),$  and the operators in (\ref{eq:delta inv}) are taken for $\delta = \delta_2$, which are denoted here by $R_{\delta_2},   \ S_{\delta_2}.$ The computation of the set  (\ref{eq:gen inv}) of generators of ${\cal I}(\Sigma_2)$ gives
	\begin{eqnarray*}
		R_{\delta_2}(u_{11})=x_1^2+x_2^2, \ \
		R_{\delta_2}(u_{12})=0, \ \
		R_{\delta_2}(u_{13})=x_3^2-x_4^2, \\
        S_{\delta_2}(u_{11}) = 0, \ \
		S_{\delta_2}(u_{12}) = (   \cosh \theta-1)x_4+ \sinh \theta x_3, \ \
		S_{\delta_2}(u_{13}) = 0.
	\end{eqnarray*}
It then follows that
$$u_{21}:= x_1^2+x_2^2, \ \ \ u_{22}:= x_3^2-x_4^2, \ \ \  u_{23}:= ((   \cosh \theta-1)x_4+
 \sinh \theta x_3)^2$$
generate  ${\cal I}(\Gamma)$. \\

We now compute a set of generators of the module ${\cal M}(\Gamma)$ of the $\Gamma$-equivariant mappings over the ring ${\cal I}(\Gamma)$.  We use Corollary~\ref{cor: main}.  \\

	For $(x,y)=(x_1,x_2,x_3,x_4,y_1,y_2,y_3,y_4) \in \mathbb{R}^4_1 \times \mathbb{R}^4_1,$ consider the diagonal action $\xi (x,y)=(\xi x, \xi y)$, for $\xi \in {\bf \widetilde{SO}}_{0}(3,1)$. The invariant ring under this subgroup is generated by  the polynomials
\begin{eqnarray} \label{eq:obvious}
u_{01}:= x_1^2+x_2^2, \ \ u_{02}:=x_3, \ \ u_{03}:=x_4, \ \ v_{01}:=y_1^2+y_2^2,  \ \  v_{02}:=y_3, \ \ v_{03}:=  y_4,
\end{eqnarray}
\begin{eqnarray} \label{eq:less obvious}
 w_{01}:=x_1 y_2-x_2 y_1, \ \ w_{02}=x_1 y_1+x_2y_2,
 \end{eqnarray}
The generators in (\ref{eq:obvious}) are the obvious ones.   In fact,
the computations above give trivially the generators of the ring
of  invariant functions on $\mathbb{R}^4_1 \times \mathbb{R}^4_1$ under
$\Gamma$, except the new ones coming from $w_1$, $w_2$ given in (\ref{eq:less obvious}), for which  $R_{\delta_1}(w_{01})) = w_{01}, \ R_{\delta_1}(w_{02}) = w_{02},$ and therefore $S_{\delta_1}(w_{01}) = 0, \ S_{\delta_1}(w_{02}) = 0. $
 It follows that
$$u_{11}:= x_1^2+x_2^2, \ u_{12}:= (   \cosh \theta-1) x_4+ \sinh \theta x_3, \ u_{13}:= x_3^2-x_4^2, $$
$$ v_{11}:= y_1^2+y_2^2, \  v_{12}:= (   \cosh \theta-1) y_4+ \sinh \theta y_3, \ v_{13}:= y_3^2-y_4^2,$$
$$	  w_{11}:= x_1 y_2-x_2 y_1, \  w_{12}:= x_1 y_1 + x_2 y_2 $$
are generators of ${\cal I}({\bf \widetilde{SO}}_{0}(3,1) \rtimes {\bf Z}_2(\delta_1)).$ Also,
$R_{\delta_2}(w_{11}) = w_{11}$ and  $R_{\delta_2}(w_{12}) = w_{12}$ and, therefore,
$S_{\delta_2}(w_{11}) = S_{\delta_2}(w_{12}) = 0$. Finally, $S_{\delta_2}(u_{12}) = u_{12}$ and $S_{\delta_2}(v_{12}) = v_{12}$, so $u_{12} v_{12}$ is an element in (\ref{eq:gen inv}).
We now use  (\ref{equivariant}) of Theorem~\ref{algoinvariante} which produces the zero map except that

	$$J(d_y w_{11})_{(x,0)}^t=( -x_2, x_1, 0, 0),$$
	$$J(d_y w_{12})_{(x,0)}^t=(x_1, x_2, 0, 0),$$
	$$J(d_y u_{12} v_{12})_{(x,0)}^t=(0, 0, \sinh \theta ,
	\cosh \theta -1) ((\cosh \theta-1) x_4+ \sinh \theta x_3),$$
forming	a system of generators of ${\cal M}(\Gamma)$ over ${\cal I}(\Gamma)$.

\section{Invariant subspaces}   \label{sec:invariant subspaces}

In this section we discuss about subspaces of $\mathbb{R}^{n+1}_1$  that are invariant under the action of a group $\Gamma < {\bf O}(n,1)$ and about existence of their invariant complement. For a given subspace $W \subseteq \mathbb{R}^{n+1}_1$, we shall say that a subspace is its  complement if they sum direct to give $\mathbb{R}^{n+1}_1$.
We start with some general results on invariant subspaces. In Subsections~\ref{subsec:2d} and \ref{subsec:3d} we characterize them for the lowest dimensions, in $\mathbb{R}^{2}_1$ and $\mathbb{R}^{3}_1$   respectively, and  classify according to their type, as space-, time- or lightlike subspaces. 

Recall that a nonzero vector $x \in \mathbb{R}^{n+1}_1$  is called \emph{spacelike}, \emph{timelike} or \emph{lightlike} if $\langle x,x \rangle >0$, $\langle x,x \rangle <0$ or $\langle x,x \rangle =0$, respectively.  A vector subspace of $\mathbb{R}^{n+1}_1$ is called {\it spacelike} if all of its nonzero vectors are spacelike; it is called {\it timelike} if it has a timelike vector and it is called {\it lightlike} if none of the above conditions are satisfied \cite{ratcliffe}. In Figure~1 we illustrate each case with planes in  $\mathbb{R}^{3}_1$ with respect to their position with respect to the {\it lightcone}
$$ LC = \{ x \in \mathbb{R}^{n+1}_1 \ : \ \left<x,x \right>=0\}. $$

\begin{figure}[h]  \label{fig:subspaces in 3d}
	\centering
	\includegraphics[scale=0.3]{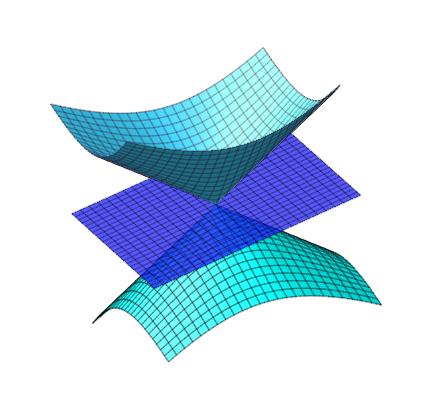}
	\includegraphics[scale=0.3]{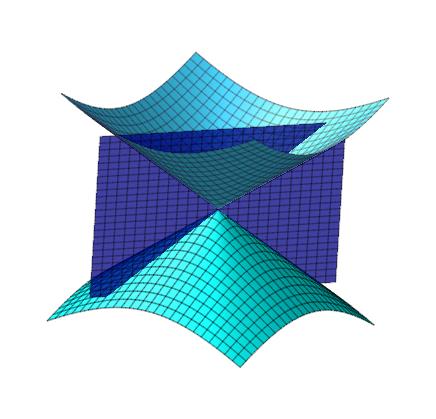}
	\includegraphics[scale=0.33]{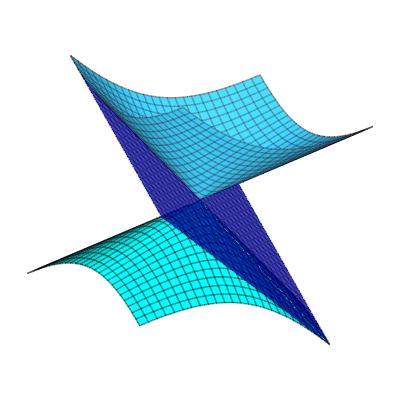}
	\caption{From left to right: a space-, a time- and a lightlike plane in $\mathbb{R}^{3}_1$.}
	\label{fig:subspacesin3d}
\end{figure}

We start with an important lemma whose proof is immediate:

\begin{lemma} \label{lemma:basic}
Let $W$ be a subspace of $\mathbb{R}^{n+1}_1$.

\noindent (a) For any $\gamma \in {\bf O}(n,1)$, $W$ and $\gamma W$ are subspaces of same type (space-, time- or lightlike)

\noindent  (b) If \  $\Sigma_1, \Sigma_2 < {\bf O}(n,1)$ are conjugate subgroups, namely $\Sigma_2 = \gamma^{-1} \Sigma_1 \gamma$, then $W$ is $\Sigma_1$-invariant if, and only if,  $\gamma W$ is $\Sigma_2$-invariant.
\end{lemma}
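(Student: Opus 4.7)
For part (a), my plan is to exploit the defining isometry property of Lorentz transformations. From the relation $\gamma^t J \gamma = J$ one immediately gets $\langle \gamma x, \gamma y\rangle = \langle x, y\rangle$ for all $x,y \in \mathbb{R}^{n+1}_1$; in particular $\langle \gamma x, \gamma x\rangle = \langle x, x\rangle$, so the causal character of each nonzero vector is preserved by $\gamma$. Since $\gamma$ is a linear bijection, it restricts to a bijection between the nonzero vectors of $W$ and those of $\gamma W$ that preserves causal type. A short case split based on the definitions then finishes the proof: the spacelike condition is a universal quantifier over nonzero vectors and so transfers to $\gamma W$; the timelike condition is existential and so transfers as well; the lightlike condition is defined by exclusion of the previous two and therefore follows automatically.

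For part (b), the verification is a direct unpacking of the conjugation identity $\Sigma_2 = \gamma^{-1} \Sigma_1 \gamma$. Assuming $W$ is $\Sigma_1$-invariant, I would take an arbitrary $\sigma_2 \in \Sigma_2$, write it as $\sigma_2 = \gamma^{-1}\sigma_1\gamma$ with $\sigma_1 \in \Sigma_1$, and evaluate $\sigma_2$ on a generic element of the translated subspace; $\Sigma_1$-invariance of $W$ then guarantees that the image still lies in the same translate. The reverse implication is handled by symmetry, rewriting the conjugation as $\Sigma_1 = \gamma\Sigma_2\gamma^{-1}$ and repeating the same computation with the roles of $\gamma$ and $\gamma^{-1}$ swapped.

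Neither part presents a real obstacle; both reduce to routine book-keeping once the isometry property and the conjugation relation are invoked. The only mildly subtle point is in the lightlike case of (a), where the definition by negation forces one to use the bijection in both directions — that is, to observe that if $\gamma W$ were to contain a timelike vector, then so would $W = \gamma^{-1}(\gamma W)$, contradicting the assumption on $W$.
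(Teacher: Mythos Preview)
Your approach is correct and is precisely the direct verification the paper has in mind: the authors do not write out a proof at all, introducing the lemma with the phrase ``whose proof is immediate'' and leaving it at that. Part (a) via the isometry identity $\langle \gamma x,\gamma y\rangle=\langle x,y\rangle$ together with the bijectivity of $\gamma$ is exactly the intended argument, and your remark about using the bijection in both directions for the lightlike case is the right way to close that subcase.

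One small caveat on part (b): when you actually carry out the computation you sketch, watch the direction of $\gamma$. With $\Sigma_2=\gamma^{-1}\Sigma_1\gamma$ and $\sigma_2=\gamma^{-1}\sigma_1\gamma$, applying $\sigma_2$ to $\gamma w$ gives $\gamma^{-1}\sigma_1(\gamma^2 w)$, which does not land in $\gamma W$ without further ado. What the computation does yield cleanly is that $\gamma^{-1}W$ is $\Sigma_2$-invariant (equivalently, $\gamma W$ is invariant under $\gamma\Sigma_1\gamma^{-1}$), which is consistent with the paper's later formula $\gamma\,\mathrm{Fix}(\Sigma)\subseteq\mathrm{Fix}(\gamma\Sigma\gamma^{-1})$. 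This is a harmless orientation issue in the statement rather than a defect in your strategy, but since your write-up stops short of the explicit line, make sure the final version tracks the side on which $\gamma$ acts.
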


A subspace is {\it nondegenerate} if the pseudo inner product  restricted to it  is a nondegenerate bilinear form. It is well-known that $W$ is nondegenerate if, and only if, $W \cap W^{\perp}$ is trivial (see \cite{oneill1983}). We show that an invariant subspace admits $W^{\perp}$ as  invariant orthogonal complement if, and only if, it is nondegenerate (Proposition~\ref{complementonaodegenerado}). If the subspace is degenerate (lightlike) we shall see that it still  admits a complement, but this is nonorthogonal, and its invariance  is attained  under a condition imposed on the group  (Proposition~\ref{complementoluz}).

\begin{proposition} \label{complementonaodegenerado}
	For any subspace $W \subseteq \mathbb{R}^{n+1}_1$,
	\begin{itemize}
		\item[(a)] $\dim W+ \dim W^{\perp} =n+1$.
		\item[(b)] For a group $\Gamma < {\bf O}(n,1)$,  $W$ is nondegenerate and $\Gamma$-invariant if, and only if,  its orthogonal subspace is a $\Gamma$-invariant complement, namely   $W \oplus W^{\perp}=\mathbb{R}^{n+1}_1$.
	\end{itemize}
\end{proposition}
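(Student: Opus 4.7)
For part (a), my plan is to use the matrix description of the pseudo inner product. Fix a basis $\{w_1,\ldots,w_k\}$ of $W$ and let $M$ be the $(n+1)\times k$ matrix whose columns are $Jw_1,\ldots,Jw_k$. By definition, $W^\perp=\{x\in\mathbb{R}^{n+1}_1 : \langle w_i,x\rangle=0,\ i=1,\ldots,k\}=\ker M^t$. Since $J$ is invertible, the columns of $M$ are linearly independent, so $\mathrm{rank}\,M^t=k$; rank–nullity then gives $\dim W^\perp=(n+1)-k$, which is (a). Equivalently, one can observe that $W^\perp=J^{-1}(W^{\perp_E})$, where $\perp_E$ denotes the Euclidean orthogonal complement, and invoke the Euclidean dimension formula.

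For (b), I would first handle the direct implication. Suppose $W$ is nondegenerate and $\Gamma$-invariant. The cited characterization (O'Neill) says nondegeneracy is equivalent to $W\cap W^\perp=\{0\}$, so combining this with part (a) yields $W\oplus W^\perp=\mathbb{R}^{n+1}_1$. It remains to check that $W^\perp$ is $\Gamma$-invariant. Take $\gamma\in\Gamma$, $y\in W^\perp$ and any $w\in W$. Since every $\gamma\in{\bf O}(n,1)$ is an isometry of the pseudo inner product, and $\gamma^{-1}w\in W$ by $\Gamma$-invariance,
\begin{equation*}
\langle w,\gamma y\rangle=\langle \gamma^{-1}w,y\rangle=0,
\end{equation*}
so $\gamma y\in W^\perp$.

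For the converse, assume $W$ is $\Gamma$-invariant and $W\oplus W^\perp=\mathbb{R}^{n+1}_1$. Then in particular $W\cap W^\perp=\{0\}$, which is exactly the nondegeneracy condition; this closes the equivalence.

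There is no real obstacle: the work is almost entirely bookkeeping with the bilinear form and the isometry condition $\gamma^tJ\gamma=J$. The only subtlety worth emphasizing, which distinguishes the Minkowski situation from the Euclidean one, is that nondegeneracy of the restriction of $\langle\,,\,\rangle$ to $W$ is an extra hypothesis that must be assumed (and not merely derived from nondegeneracy on the ambient space); this is precisely why part (b) requires the nondegeneracy of $W$ as a standing assumption, in contrast with Proposition XII.2.1 of \cite{GS69}.
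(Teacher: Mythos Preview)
Your proof is correct and follows essentially the same route as the paper. For (a) the paper also invokes rank--nullity on the linear map $x\mapsto AJx$ (with $A$ built from a basis of $W$), which is exactly your $M^t$ up to padding by zero rows; for (b) the paper uses the same isometry computation $\langle w,\gamma u\rangle=\langle\gamma^{-1}w,u\rangle=0$ to get $\Gamma$-invariance of $W^\perp$, and then the O'Neill characterization $W$ nondegenerate $\Leftrightarrow W\cap W^\perp=\{0\}$ together with (a) to handle the direct-sum equivalence in both directions.
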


\begin{proof}
	\begin{itemize}
		\item[(a)] Consider an operator $\varphi : \mathbb{R}^{n+1}_1 \to \mathbb{R}^{n+1}_1$,  $\varphi (x)=AJx$, where $A$ is a square matrix whose lines are formed by basic vectors of $W$ and by lines with all entries equal to 0. We have that
		$\dim Im \varphi= rank(A)= \dim W$. Thus, $\dim \ker \varphi+ \dim W=n+1$. But  $\ker \varphi=W^{\perp}$, so the result follows.
		\item[(b)] The "if" part is immediate: for  $u \in W^{\perp}$ and $\gamma \in \Gamma$, from the $\Gamma$-invariance of $W$, for all $w \in W$ we have
		\begin{eqnarray*}
			\left<w, \gamma u \right> = \left<\gamma^{-1} w, u\right>=0.
		\end{eqnarray*}
		Now, $W$ is nondegenerate if, and only if,  $W \cap W^{\perp}$ is trivial which, together with (a), gives the result.
	\end{itemize}	
\end{proof}

\begin{proposition} \label{complementoluz}
	Let $\Gamma < {\bf O}(n,1)$ such that $\gamma^t \in \Gamma$, for all $\gamma \in \Gamma$. If $W$ is a $\Gamma$-invariant lightlike subspace, then $JW^{\perp}$ is a $\Gamma$-invariant complement of $W$, with $W \oplus JW^{\perp}=\mathbb{R}^{n+1}_1$.
\end{proposition}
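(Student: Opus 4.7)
The plan is to prove the two assertions separately: first the direct sum decomposition $W\oplus JW^{\perp}=\mathbb{R}^{n+1}_1$, and then the $\Gamma$-invariance of $JW^{\perp}$.

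For the decomposition, I would begin with a dimension count. Since $J$ is invertible, $\dim JW^{\perp}=\dim W^{\perp}$, and by Proposition~\ref{complementonaodegenerado}(a) this gives $\dim W+\dim JW^{\perp}=n+1$. It therefore suffices to show $W\cap JW^{\perp}=\{0\}$. The idea here is to convert the pseudo inner product into the Euclidean one using $J$: if $x\in W\cap JW^{\perp}$, then $Jx\in W^{\perp}$ and $x\in W$, so $\langle Jx,x\rangle=0$; but $\langle Jx,x\rangle=(Jx)^{t}Jx=x^{t}J^{2}x=x^{t}x$ is the Euclidean squared norm, which forces $x=0$. This trick replaces the degenerate pseudo metric on $W$ by the definite Euclidean one and is what makes the complement work precisely when $W$ is lightlike (so that we cannot use $W^{\perp}$ itself).

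For the $\Gamma$-invariance, the plan is to exploit the identity $J\gamma J=(\gamma^{t})^{-1}$, which follows directly from $\gamma^{t}J\gamma=J$ and $J^{2}=I$. First I would observe that $W^{\perp}$ is automatically $\Gamma$-invariant whenever $W$ is, by exactly the argument in the "if" part of Proposition~\ref{complementonaodegenerado}(b) (no nondegeneracy is used there). Then, for $\gamma\in\Gamma$ and $Jv\in JW^{\perp}$ with $v\in W^{\perp}$, I compute
\begin{equation*}
\gamma Jv \;=\; J\bigl(J\gamma J\bigr)v \;=\; J(\gamma^{t})^{-1}v.
\end{equation*}
The hypothesis $\gamma^{t}\in\Gamma$ (plus the fact that $\Gamma$ is a group) gives $(\gamma^{t})^{-1}\in\Gamma$, so $(\gamma^{t})^{-1}v\in W^{\perp}$ by the invariance of $W^{\perp}$, and therefore $\gamma Jv\in JW^{\perp}$.

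The main obstacle is the invariance step rather than the direct sum: one must see that the correct object to feed $W^{\perp}$ to is $(\gamma^{t})^{-1}$ rather than $\gamma$ itself, and this is exactly what forces the hypothesis $\gamma^{t}\in\Gamma$ into the statement. The decomposition step is essentially a linear-algebra identity; the subtlety is recognizing that applying $J$ turns the lightlike obstruction (vectors in $W\cap W^{\perp}$) into a Euclidean orthogonality condition, which then trivially has only the zero solution.
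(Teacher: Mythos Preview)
Your proof is correct and follows essentially the same route as the paper's: the same identity $\gamma J = J(\gamma^{t})^{-1}$ for the invariance of $JW^{\perp}$, and the same reduction of $W\cap JW^{\perp}=\{0\}$ to a Euclidean norm via $\langle Jx,x\rangle = x^{t}x$, together with the dimension count from Proposition~\ref{complementonaodegenerado}(a). The only cosmetic difference is that the paper treats the invariance first and the direct sum second.
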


\begin{proof}
First, notice that $JW^{\perp}$ is $\Gamma$-invariant:  for $u = Jv \in J W^{\perp}$,
$$\gamma u=\gamma Jv =J (\gamma^t)^{-1} v,$$
which belongs to $JW^{\perp}$, since $W^{\perp}$ is $\Gamma$-invariant and $\gamma^t \in \Gamma$ by hypothesis. Also, if  $w = Jv \in W \cap JW^{\perp}$,  then
	$\left< w,v \right>=0$, and so $\left< Jv,v \right>=0$, implying that $v=0$. Hence,  $ W \cap JW^{\perp}$ is trivial, Now, it follows from (a) of Proposition~\ref{complementonaodegenerado} that   $\dim W + \dim(J W^{\perp}) = n+1$. Therefore, $W \oplus JW^{\perp}=\mathbb{R}^{n+1}_1$.
\end{proof}

The two propositions above  provide the general way to decompose   $\mathbb{R}^{n+1}_1$ as  a direct sum of an invariant  subspace and its complement, depending on its type:  if $U$ is a spacelike or a timelike subspace and $W$ is lightlike subspace, then
$$ \mathbb{R}^{n+1}_1 \ = \  U \oplus U^\perp  \ = \  W \oplus J W^\perp. $$

In presence of symmetries, one important class of invariant subspaces is the class of fixed-point subspace. For a given group  $\Gamma < {\bf O}(n,1)$, Recall that the fixed-point subspace of a subgroup $\Sigma < \Gamma$ is the subspace
 $$ {\rm Fix}(\Sigma)=\{v \in V : \sigma v=v, \forall \sigma \in \Sigma\}.$$
 There are two  basic facts that are the main motivations for the results presented in this section, concerning the applications. First, we recall that  for a $\Gamma$-equivariant mapping  $g$,
 \begin{eqnarray} \label{eq:equiv}
 g( {\rm Fix}(\Sigma)) \subset  {\rm Fix}(\Sigma).
 \end{eqnarray}
 Also, fixed-point subspaces of conjugate subgroups are related by
 \begin{eqnarray} \label{eq:conjugacy}
 \gamma {\rm Fix}(\Sigma) \subseteq {\rm Fix}(\gamma \Sigma \gamma^{-1}).
 \end{eqnarray}
One interest is related to  symmetric dynamics: when this is ruled by a $\Gamma$-equivariant mapping  $g$ (defining a vector field, for instance), we have that (\ref{eq:equiv}) holds
for all $\Sigma < \Gamma$. Therefore, these are subspaces on which the dynamics must remain invariant.
In another direction, we mention the study of the geometry of surfaces which are given as the inverse image $f^{-1}(c)$, \ $c \in \mathbb{R}$, for some $f$ $\Gamma$-invariant function. By construction, the whole group leaves this surface setwise invariant and, in addition, the whole space is foliated by these surfaces in a symmetric way. Now, recall that the normalizer N($\Sigma$) is the symmetry group of the set $ {\rm Fix}(\Sigma)$, in the sense that it is the largest subgroup of $\Gamma$ that leaves $ {\rm Fix}(\Sigma)$ setwise invariant. Hence, we can use that structure to understand the surface "in peaces" preserving their symmetries, once $\Gamma  \backslash$ N($\Sigma$) are the symmetries of  $f^{-1}(0) \cap  {\rm Fix}(\Sigma)$. It now follows from (\ref{eq:conjugacy}) that each peace can be taken in conjugacy classes of subgroups of $\Gamma$.

Notice that in $\mathbb{R}^{3}_1$  the orthogonal subspace to a lightlike line $W$ is a plane tangent  to the lightcone which contains  $W$. Likewise, if $W$ is a lightlike plane then its orthogonal subspace is a lightlike line contained in $W$. This follows directly from the fact that if $W$ in $\mathbb{R}^{n+1}_1$ is lightlike then $\dim W \cap W^{\perp}=1$ (see \cite{izumiya2009legendrian} for example) and also from the fact that $W$ is spacelike if, and only if, $W^{\perp}$ is timelike (see  \cite{ratcliffe} for example).

\subsection{Invariant lines in $\mathbb{R}^{2}_1$} \label{subsec:2d}

The standard 2-dimensional representation of the Lorentz group is
\begin{eqnarray} \label{eq:O11}
 {\bf O}(1,1) = {\bf SO}_0(1,1) \ \dot{\cup} \  \Lambda^{p} {\bf SO}_{0}(1,1) \  \dot{\cup} \ \Lambda^{t} {\bf SO}_{0}(1,1) \  \dot{\cup}  \  - I {\bf SO}_{0}(1,1),
 \end{eqnarray}
 where ${\bf SO}_0(1,1)$ is the group of {\it hyperbolic rotations,}
 \begin{eqnarray} \label{eq:hyper rot}
 H_\theta :=  \left( \begin{array}{cc}    \cosh \theta &  \sinh \theta \\  \sinh \theta &    \cosh \theta \end{array} \right),
  \end{eqnarray}
for $\theta \in {\mathbb{R}}$, and
$$ \Lambda^{p} =\left( \begin{array}{cc} -1 & 0 \\ 0 & 1 \end{array} \right), \ \  \Lambda^{t}=\left( \begin{array}{cc} 1 & 0 \\ 0 & -1 \end{array} \right).$$

\begin{itemize}
\item For an arbitrary element  in  $ {\bf SO}_0(1,1)$ or in the component  $ -I {\bf SO}_0(1,1)$,  it is  direct that  the invariant lines under this element are the two light lines of the lightcone,
$$W_1 = \{ (x,y) \in\mathbb{R}^{2}_1 \ :  y=x\}, \ \ \ W_2 = \{ (x,y) \in\mathbb{R}^{2}_1 \ :  y=-x\}. $$
These are degenerate subspaces and one is the complement of the other as in Proposition~\ref{complementoluz}. Clearly neither is a fixed-point subspace.

\item The invariant lines under each $\Lambda^p$ and $\Lambda^t$ are the $x$-axis (a space line) and the $y$-axis (a time line), and one is the orthogonal complement of the other. The $x$-axis is ${\rm Fix}({\bf Z}_2(\Lambda^t))$  and the $y$-axis is ${\rm Fix}({\bf Z}_2(\Lambda^p))$.  Any other involution belongs to one of the other two components of (\ref{eq:O11}) and it is conjugate to   either $\Lambda^p$, if it is of the form  $\Lambda^p H_\theta$,  or to $\Lambda^t$, if it is of the form $\Lambda^t H_\theta$, the conjugacy matrix for both cases being $\gamma =  H_{-\theta/2}$.
Up to conjugacy, there are therefore   only two classes of involutions.
For any $\theta \in {\mathbb R}$, we can use Lemma~\ref{lemma:basic} for this $\gamma$ to obtain explicitly all the invariant lines under the involutions of ${\bf O}(1,1)$.

\item Any subgroup $\Sigma$ generated by two or more elements has nontrivial invariant subspaces only if $\Sigma < {\bf SO}_0(1,1) \times {\bf Z}_2(-I)$, and these are $W_1$ and $W_2$ above.
\end{itemize}
In Table~1 we  summarize these results. The straight lines in the second column are the lines invariant under the subgroups  given in the first colum. Their type is given in the third column and the last column shows their complement subspaces.

\begin{table}[!ht] \label{table1}
	\footnotesize
	\begin{center}
		\caption{Invariant subspaces of $\mathbb{R}^2_1$.}
		\begin{tabular}{c|c|c|c}
			\hline \hline
			Subgroup of ${\bf O}(1,1)$  & Invariant subspaces & Type &  Complement subspace \\ \hline \hline
			$[H_{\theta}]$ & $\begin{array}{l}   W_1 = \{(x,y) \in \mathbb{R}^{2}_1 : y= x \}    \\  W_2 =   \{(x,y) \in \mathbb{R}^{2}_1 : y=- x \}  \end{array}$ &  {\rm light} &
			$\begin{array}{c}  J W_1^\perp  = W_2 \\ J W_2^\perp = W_1 \end{array}$ \\ \hline
	
          $[- H_{\theta}]$ & $\begin{array}{c}   W_1   \\ W_2   \end{array}$ & {\rm light} &
			$\begin{array}{c}  J W_1^\perp  = W_2 \\ J W_2^\perp = W_1 \end{array}$  \\ \hline
		
		${\bf Z}_2( \Lambda^{t})$ & $\begin{array}{c}     {\rm Fix}({\bf Z}_2(\Lambda^t)) =  x{\rm- axis}\\ y{\rm-axis} \end{array}$ & $\begin{array}{c}  {\rm space} \\ {\rm time} \end{array}$ &
					$\begin{array}{c} {\rm Fix}({\bf Z}_2(\Lambda^t))^\perp \\ {\rm Fix}({\bf Z}_2(\Lambda^p))^\perp \end{array}$ \\ \hline	
			
				${\bf Z}_2( \Lambda^{p})$ & $\begin{array}{c}   {\rm Fix}({\bf Z}_2(\Lambda^p)) = y{\rm-axis}  \\ x{\rm -axis} \end{array}$ & $\begin{array}{c}  {\rm time} \\ {\rm space} \end{array}$ &
									$\begin{array}{c} y{\rm -axis} \\ x{\rm-axis} \end{array}$ \\ \hline	
		\end{tabular}
	\end{center}
\end{table}

\subsection{Invariant lines and invariant planes in $\mathbb{R}^{3}_1$} \label{subsec:3d}

The standard 3-dimensional representation of the Lorentz group is
\begin{eqnarray} \label{eq:O21}
 {\bf O}(2,1) = {\bf SO}_0(2,1) \ \dot{\cup} \  \Lambda^{p} {\bf SO}_{0}(2,1) \  \dot{\cup} \ \Lambda^{t} {\bf SO}_{0}(2,1) \  \dot{\cup}  \  \Lambda^{pt} {\bf SO}_{0}(2,1),
 \end{eqnarray}
 where ${\bf SO}_0(2,1)$ is the group of {\it hyperbolic rotations} $H^{+}$ which, using  singular value decomposition    (\cite{Gallier}), are written as
 	\begin{eqnarray} \label{eq:3d hyp rot}
H^{+} =
		\left (\begin{array}{ccc}  \cos \varphi & -\sin \varphi  & 0 \\ \sin \varphi  &  \cos \varphi & 0 \\ 0 & 0 & \epsilon \end{array} \right)
		\left (\begin{array}{ccc} 1  & 0 & 0 \\
			0 &     \cosh \theta &  \sinh \theta \\
			0 &  \sinh \theta &     \cosh \theta
		\end{array} \right)
		\left (\begin{array}{ccc}  \cos \phi & -\sin(\phi)  & 0 \\ \sin(\phi) &  \cos \phi & 0 \\ 0 & 0 & 1 \end{array} \right),
	\end{eqnarray}
for	 $\varphi, \theta, \phi \in {\Bbb R}$ and $\epsilon = 1$, and
\begin{eqnarray} \label{eq:lambdas}
\Lambda^p = \left (\begin{array}{ccc}  1 & 0  & 0 \\ 0 & -1 & 0 \\ 0 & 0 & 1 \end{array} \right), \ \ \Lambda^t = J.
\end{eqnarray}
Matrices in the component 	 $\Lambda^{pt} {\bf SO}_{0}(2,1)$ are of the form
	\begin{eqnarray} \label{eq:3d involutions}
H^{-} =
		\left (\begin{array}{ccc}  \cos \varphi & \sin \varphi  & 0 \\ \sin \varphi  & - \cos \varphi & 0 \\ 0 & 0 & \epsilon \end{array} \right)
		\left (\begin{array}{ccc} 1  & 0 & 0 \\
			0 &     \cosh \theta &  \sinh \theta \\
			0 &  \sinh \theta &   \epsilon  \cosh \theta
		\end{array} \right)
		\left (\begin{array}{ccc}  \cos \phi & -\sin \phi   & 0 \\ \sin(\phi) &  \cos \phi & 0 \\ 0 & 0 & 1 \end{array} \right),
	\end{eqnarray}
for	 $\varphi, \theta, \phi \in {\Bbb R}$ and $\epsilon = -1$. Matrices in 	 $\Lambda^t  {\bf SO}_{0}(2,1)$ are of the form (\ref{eq:3d hyp rot}) for $\epsilon = -1$ and in 	 $\Lambda^p  {\bf SO}_{0}(2,1)$ are of the form (\ref{eq:3d involutions}) for $\epsilon = 1$.

Below we consider invariant subspaces under an element in each of the connected components of ${\bf O}(2,1)$. We recall that elements in distinct components are not conjugate.
	
	 \begin{itemize}
	 \item For  $H^{+} \in {\bf SO}_0(2,1)$,  given in  (\ref{eq:3d hyp rot}) with $\epsilon = 1$,  if  $\varphi + \phi = \pi$ then it is conjugate to $-\Lambda^t$, by the conjugacy matrix 
	\begin{eqnarray} \label{matrizconj}
	 \left( \begin{array}{ccc} 
	 	 	-\cos \phi  & -\sin \phi\cosh \theta/2 & -\sin \phi \sinh \theta/2 \\
	 	 	\sin \phi & -\cos \phi \cosh \theta/2 & -\cos \phi \sinh \theta/2 \\
	 	 	0 & \sinh \theta/2 & \cosh \theta/2
	 	 	\end{array} \right).
	 	 	\end{eqnarray} 
The invariant lines under $-\Lambda^t$ are the time line given by the $z$-axis, which is Fix(${\bf Z}_2(- \Lambda^t)$), and the space lines in the plane $z=0$. The invariant planes are the space plane $z=0$ and all the time planes containing the $z$-axis.  	
	
	 \item  For $H^{+} \in  {\bf SO}_0(2,1)$ given in (\ref{eq:3d hyp rot}) with $\epsilon = 1$, if  $\varphi + \phi  \neq \pi $, invariant lines and planes can be of any type, depending on the values of $\varphi, \theta, \phi$.  The subgroup $[H^{+}]$ generated by $H^{+}$ is noncompact, and  Fix($[H^{+}]$) is the line generated by the vector
	 $$  \Bigl(1,\frac{\sin\varphi - \sin \phi}{ \cos \phi+ \cos \varphi}, \frac{( \cos \varphi\sin \phi+\sin \varphi  \cos \phi \sinh \theta}{(  1- \cosh \theta)( \cos \phi+ \cos \varphi)}\Bigr), $$
which can also be of any type.  	
	
	 \item  For $\Lambda^{t} H^{+} \in \Lambda^t {\bf SO}_0(2,1)$,  given in  (\ref{eq:3d hyp rot}) with $\epsilon = -1$,  if  $\varphi = - \phi $ then it is conjugate to $\Lambda^t$, by the conjugacy matrix (\ref{matrizconj}).  The invariant lines under $\Lambda^t$ are the time line given by the $z$-axis, and the space lines in the plane $z=0$. The invariant planes are the space plane $z=0$ which is Fix(${\bf Z}_2(\Lambda^t)$)  and all the time planes containing the $z$-axis.  	
	 	 	
	 \item  For $\Lambda^{t}  H^{+} \in  \Lambda^{t} {\bf SO}_0(2,1)$ given in (\ref{eq:3d hyp rot}) with $\epsilon = -1$ and for $\varphi \neq \phi$,  Fix(${\bf Z}_2(\Lambda^t H^{+})$) is trivial. There are no invariant lines or planes by the action of the group ${\bf Z}_2(\Lambda^t H^{+})$.
	
	 \item  For $\Lambda^{p} H^{-} \in \Lambda^p {\bf SO}_0(2,1)$,  given in  (\ref{eq:3d involutions}) with $\epsilon = 1$,  if  $\varphi = - \phi $ then it is conjugate to $\Lambda^p$, by the conjugacy matrix (\ref{matrizconj}).  The invariant lines under $\Lambda^p$ are the space line given by the $y$-axis, and the time lines in the plane $y=0$. AQUI The invariant planes are the time plane $y=0$ which is Fix(${\bf Z}_2(\Lambda^p)$)  and all the time planes containing the $y$-axis.  	
	 		 	 	
	\item  For $\Lambda^{p}  H^{-} \in  \Lambda^{p} {\bf SO}_0(2,1)$ given in (\ref{eq:3d hyp rot}) with $\epsilon = 1$, if  $\varphi \neq \phi$ then Fix(${\bf Z}_2(\Lambda^p H^{+})$) is trivial.  There are no invariant lines or planes by the action of the group ${\bf Z}_2(\Lambda^p H^{-})$.
	
	 \item For $H^{-} \in \Lambda^{pt}  {\bf SO}_0(2,1)$, given in  (\ref{eq:3d involutions}) with $\epsilon = -1$, if  $\varphi + \phi = \pi$ then it is conjugate to $-\Lambda^p$, by the conjugacy matrix (\ref{matrizconj}). The invariant lines under $-\Lambda^p$ are the space line given by the $y$-axis, which is Fix(${\bf Z}_2(-\Lambda^p)$), and the all lines in the plane $y=0$, whose types are time-, light- and spacelike.  The invariant planes are the time plane $y=0$ and all the planes containing the $y$-axis, which are the two light planes  $z \pm x = 0$ (tangent to the lightcone)  and all space and time planes containing the $y$-axis.  	
	 	 	
	 \item For $H^{-} \in \Lambda^{pt}  {\bf SO}_0(2,1)$, given in  (\ref{eq:3d involutions}) with $\epsilon = -1$, if  $\varphi + \phi \neq \pi$, invariant lines and planes can be of any type, depending on the values of $\varphi, \theta, \phi$.  The subgroup $[\Lambda^{pt} H^{+}]$ generated by $\Lambda^{pt} H^{+}$ is noncompact, and  Fix($[\Lambda^{pt} H^{+}]$) is the line generated by
	 	 $$ \Bigl(1,-\frac{\sin(\phi)-\sin(\varphi)}{ \cos \phi+ \cos \varphi}, -\frac{( \cos \varphi\sin(\phi)+\sin(\varphi) \cos \phi) \sinh \theta}{(   \cosh \theta+1)( \cos \phi+ \cos \varphi)}\Bigr) $$
	 which can also be of any type.
	 \end{itemize}	
Finally, if a  subgroup  $\Sigma < {\bf O}(2,1)$ is generated by more than one element, then these appear in the list above  and so Fix($\Sigma$) is computed directly as the intersection of the fixed-point subspaces of each of its generators. \\

\vspace*{1cm}
\noindent {\bf Acknowledgments.} The authors acknowledge partial support by CAPES under CAPES/FCT grant 99999.008523/2014-02  and  CAPES/PROEX  grant 1183747.

\end{document}